\documentclass[reqno]{amsart}

\usepackage{a4wide}

\usepackage[hidelinks]{hyperref}

\usepackage[dvipsnames]{xcolor}
\definecolor{Mybgcolor}{HTML}{2E3440}
\definecolor{Mytextcolor}{HTML}{ECEFF4}
\definecolor{Mycommentcolor}{HTML}{88C0D0}
\definecolor{Myquestioncolor}{HTML}{B48EAD}
\definecolor{profcolor}{HTML}{EBCB8B}

\usepackage[shortlabels]{enumitem}
\setlist[enumerate,1]{label={\arabic*. }}

\usepackage{amsmath,amssymb}
\usepackage{microtype}
\usepackage{mathrsfs} 

\usepackage{tikz}

\newcommand{\dd}[1]{\mathrm{d}{#1}}

\newcommand{\loc}{\mathrm{loc}}

\newcommand{\dist}{\mathrm{dist}}

\newcommand\HH{\mathcal{H}}
\newcommand{\N}{\mathbb{N}}
\newcommand{\R}{\mathbb{R}}
\newcommand{\eps}{\varepsilon}
\newcommand{\mres}{\mathbin{\vrule height 1.6ex depth 0pt width
0.13ex\vrule height 0.13ex depth 0pt width 1.3ex}}

\newcommand{\fsln}{for sufficiently large $n\in \N$}

\newcounter{hints}
\renewcommand{\thehints}{\arabic{hints}}
\newcommand{\hintedrel}[2][]{
  \refstepcounter{hints}
  \if\relax\detokenize{#1}\relax\else\label{#1}\fi
  \mathrel{\overset{\mathrm{(\thehints)}}{\vphantom{\le}{#2}}}
}

\newtheorem{thm}{Theorem}[section]
\newtheorem{prop}[thm]{Proposition}
\newtheorem{lmm}[thm]{Lemma}

\theoremstyle{definition}

\theoremstyle{remark}

\numberwithin{equation}{section}

\begin{document}

\title[Boundary blow-ups for the Mumford--Shah functional]{Classification of boundary blow-ups\\for the Mumford--Shah functional\\with Dirichlet boundary conditions}
\author{Francesco Deangelis}
\address{Applied Mathematics M\"unster, University of M\"unster,
Einsteinstrasse 62, 48149 M\"unster, Germany}
\email{francesco.deangelis@uni-muenster.de}
\date{}
\keywords{Mumford--Shah functional, Dirichlet boundary conditions, generalized minimizers, boundary regularity, blow-down}

\begin{abstract}
We classify boundary blow-ups of minimizers of the Mumford--Shah functional
on bounded planar domains with Dirichlet boundary conditions. At every
point where the discontinuity set meets the boundary of the domain, we
prove that the discontinuity set of each blow-up is the tangent line to
the boundary at that point. This settles the conjecture formulated by
the author in 2024.
\end{abstract}

\maketitle

\section{Introduction}
\label{section: introduction}
The Mumford--Shah functional was introduced in \cite{MumfordShah} as
a variational model for image segmentation. Its two unknowns are a
function and a set across which the function may be discontinuous.
The competition between the Dirichlet energy of the function and the
length of this set gives rise to a free discontinuity problem: the
set is itself determined by minimization. In this paper we consider
the functional with prescribed Dirichlet boundary conditions and
study the geometry of the discontinuity set near the boundary.

Let $\Omega\subset\R^2$ be a bounded domain with boundary of class
$C^{1,\gamma}$, and let $g\in C^{1,\gamma}(\partial\Omega)$ for some
$\gamma\in(0,1)$. An admissible pair consists of a closed set
$K\subset\overline\Omega$ and a function
$u\in W^{1,2}(\Omega\setminus K)$ whose trace agrees with $g$ on
$\partial\Omega\setminus K$. We consider minimizers of
\begin{equation}
    \label{eq: mumford-shah functional}
    E(K,u) = \int_{\Omega \setminus K} |\nabla u|^2 + \HH^1(K)
\end{equation}
among all such pairs, where $\HH^1$ denotes one-dimensional Hausdorff
measure. The first term penalizes spatial variation of $u$ away from
$K$, while the second measures the length of $K$. In particular, the
length term includes $K\cap\partial\Omega$, whereas the boundary datum
is imposed only on $\partial\Omega\setminus K$. Thus the choice of
$K$ also determines the portion of the boundary on which $u$ must
attain the prescribed values. Existence of minimizers for this Dirichlet
problem was established by Carriero and Leaci \cite{carriero-leaci}.

This problem also arises in the variational approach to brittle fracture
under antiplane deformation \cite{francfort-marigo,bourdin-francfort-marigo}.
Here $u$ represents the displacement, $g$ the prescribed boundary displacement,
and $K$ the fracture set. The two terms in the functional account for
stored elastic energy and the energy required to create the fracture.

We call $K$ the discontinuity set. As in \cite{deangelis-MS}, we work
with normalized minimizers, meaning that
\[
 \HH^1(K\cap B_\rho(x))>0
 \qquad\text{for every }x\in K\text{ and }\rho>0,
\]
where $B_\rho(x)$ is the open Euclidean ball of radius $\rho$ centered
at $x$. This condition removes portions of the closed set that are
invisible to its length measure. No smoothness of $K$ is part of the
admissibility conditions. The regularity problem is to determine its
local geometry from minimality.

At interior points, the Mumford--Shah conjecture describes $K$ as a
locally finite union of $C^1$ arcs, with possible free endpoints and
triple junctions at angles of $120^\circ$. The author proved this
conjecture in \cite{deangelis-MS-conjecture}. A central tool in the
interior theory is to magnify a minimizer around a point and classify
the resulting limits, called blow-ups. Epsilon-regularity theorems
then turn sufficiently good approximation by one of these limiting
configurations into regularity at smaller scales.
For contributions to the regularity and classification theory, see
\cite{de-giorgi-carriero-leaci,dal-maso-morel-solimini,david-arcs,bonnet,AP97,AFP97,AFP99,Leg99,bonnet-david,david-leger,Lem11,de-lellis-focardi-density,de-lellis-focardi-integrability,bucur-luckhaus,de-philippis-figalli,Lem15,lemenant-mikayelyan,AM19,de-lellis-focardi-ghinassi,fischer,zhang,labourie-lemenant}.
A comprehensive account is given in \cite{delellis-focardi-book}.

At points of $K\cap\partial\Omega$, the prescribed boundary values enter the
limiting problem. The corresponding classification is the subject
of the present paper.

\subsection*{Boundary blow-ups}
Fix $x\in K\cap\partial\Omega$. For $r>0$, consider the rescaled
domain, set, and function
\[
 \Omega_{x,r}=\frac{\Omega-x}{r},\qquad
 K_{x,r}=\frac{K-x}{r},\qquad
 v_{x,r}(y)=\frac{u(x+ry)-g(x)}{\sqrt r}.
\]
The exponent $1/2$ is chosen so that, under this rescaling, both the
Dirichlet energy and the length are multiplied by $r^{-1}$. Let $T_x\partial\Omega$ denote
the tangent space to $\partial\Omega$ at $x$, and let $H_x$ be the
half-plane bounded by $T_x\partial\Omega$ on the interior side of $\Omega$. As $r\downarrow0$, the rescaled domains converge locally
to $H_x$. The subtraction of $g(x)$ and the regularity of $g$ ensure
that the rescaled boundary data tend locally to zero.

A boundary blow-up at $x$ is a limiting pair $(K_0,u_0)$ obtained
along a sequence $r_j\downarrow0$. The compactness theory of
\cite{deangelis-MS} gives local Hausdorff convergence of the sets
$K_{x,r_j}$ and convergence of the functions on complementary
components, after subtraction of componentwise additive constants
when necessary. The precise convergence is recalled in
Section~\ref{section: definitions and main results}. After a rotation,
we may take $H_x=U:=\{y_2>0\}$, so that $\partial U=\R\times\{0\}$.
The limiting set $K_0$ is a closed subset of $\overline U$ containing
the origin, and $u_0$ is locally Sobolev on $U\setminus K_0$, with
zero trace on $\partial U\setminus K_0$. On components joined to the lower
half-plane through $\partial U\setminus K_0$, this trace fixes the additive
normalization.

These limiting pairs are generalized minimizers of the half-plane
problem in the sense of \cite[Definition~2.5]{deangelis-MS}; the precise
definition is recalled in Section~\ref{section: definitions and main results}.

\subsection*{Previous results and the main theorem}
The boundary regularity theorem of \cite{deangelis-MS} shows that if
$x\in K\cap\partial\Omega$ and $K$ is sufficiently close to the tangent
line $x+T_x\partial\Omega$ to $\partial\Omega$ at $x$ in a small ball
centered at $x$, then $K$ is a $C^{1,\alpha}$ graph in a smaller ball,
for some $\alpha\in(0,1)$. This makes the
classification of boundary blow-ups the natural next step. The same
work conjectured that every such blow-up has discontinuity set
$T_x\partial\Omega$ and established partial classifications, including
the cases in which the limiting Dirichlet energy vanishes or the
limiting discontinuity set is connected.

Our classification result, Theorem~\ref{thm: classification at boundary contact},
settles this conjecture by classifying all normalized generalized
minimizers $(K,u)$ on $U$ whose discontinuity set meets $\partial U$. We prove
that, for some $c\in\R$,
\[
 K=\partial U,\qquad u\equiv c\quad\text{in }U.
\]
In particular, for every minimizer of
\eqref{eq: mumford-shah functional} and every
$x\in K\cap\partial\Omega$, each boundary blow-up has discontinuity
set $T_x\partial\Omega$. This identifies the limiting geometry at
every such point and supplies the classification required by the
boundary epsilon-regularity theory.

\subsection*{Global regularity for the Dirichlet problem}
Combining this classification with the boundary epsilon-regularity theorem
of \cite{deangelis-MS} and the solution of the Mumford--Shah conjecture
\cite{deangelis-MS-conjecture} gives a complete description of the
discontinuity set, including its behavior at the boundary. The conclusion
also holds for the full Mumford--Shah functional: for $f\in L^\infty(\Omega)$
and $\lambda\geq0$, set
\begin{equation}
\label{eq: full mumford-shah functional}
 E_\lambda(K,u)
 =\int_{\Omega\setminus K}\bigl(|\nabla u|^2+\lambda|u-f|^2\bigr)\,\dd{x}
   +\HH^1(K).
\end{equation}
Here $f$ is the fidelity datum, while $g$ remains the prescribed boundary datum.

\begin{thm}[Global regularity for the Dirichlet problem]
\label{thm: global Dirichlet regularity}
Let $\Omega\subset\R^2$ be a bounded domain with boundary of class
$C^{1,\gamma}$, let $g\in C^{1,\gamma}(\partial\Omega)$ for some
$\gamma\in(0,1)$, and let $f\in L^\infty(\Omega)$ and $\lambda\geq0$.
Let $(K,u)$ be a normalized minimizer of
\eqref{eq: full mumford-shah functional} among admissible pairs with
$u=g$ on $\partial\Omega\setminus K$.
Then $K$ is a finite union of regular $C^1$ arcs, with the following local structure:
\begin{enumerate}[label=(\roman*)]
\item At every point of $K\cap\Omega$, the set $K$ is locally a single
regular arc, a regular arc ending at that point, or three regular arcs
meeting only at that point at angles of $120^\circ$.
\item At every $x\in K\cap\partial\Omega$, the set $K$ is locally a single
$C^{1,\alpha}$ arc containing $x$ in its interior, for some $\alpha\in(0,1)$,
and its tangent line at $x$ is $x+T_x\partial\Omega$.
\end{enumerate}
In particular, the free endpoints and triple junctions are finite in number
and lie in $\Omega$.
\end{thm}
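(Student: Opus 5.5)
The plan is to derive the statement from three ingredients: the classification of boundary blow-ups (Theorem~\ref{thm: classification at boundary contact}), the boundary epsilon-regularity theorem of \cite{deangelis-MS}, and the interior regularity coming from the solution of the Mumford--Shah conjecture \cite{deangelis-MS-conjecture}. We first reduce the full functional \eqref{eq: full mumford-shah functional} to the case $\lambda=0$ up to a perturbation. By the maximum principle (truncation at $\max(\|f\|_\infty,\|g\|_\infty)$), $u$ is bounded. Hence the fidelity term contributes at most $C\rho^2$ on $B_\rho(x)\cap\Omega$. So $(K,u)$ is an almost-minimizer of \eqref{eq: mumford-shah functional} with gauge $\omega(\rho)=C\rho$. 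Both the compactness theory of \cite{deangelis-MS} and the interior theory are stable under such gauges. In particular, blow-ups at points of $K\cap\partial\Omega$ are still normalized generalized minimizers on $U$ whose discontinuity set contains the origin, and rescaling kills the lower-order term. We will verify, or quote, that the epsilon-regularity statements in \cite{deangelis-MS} and \cite{deangelis-MS-conjecture} are formulated for such almost-minimizers.

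Next we prove (ii). Fix $x\in K\cap\partial\Omega$. By Theorem~\ref{thm: classification at boundary contact}, every blow-up at $x$ has discontinuity set $T_x\partial\Omega$. The compactness gives local Hausdorff convergence of $K_{x,r_j}$, so at some small scale $r$ the set $K$ is $\eps r$-close in Hausdorff distance to $x+T_x\partial\Omega$ in $B_r(x)$. We argue by contradiction over sequences: if no such $r$ existed, we could extract a blow-up with a different limit set. The boundary epsilon-regularity theorem then shows that $K\cap B_{r/2}(x)$ is a $C^{1,\alpha}$ graph over the tangent line, which gives (ii). The same reasoning, with $\eps$ uniform, applies at all points of $K\cap\partial\Omega\cap B_{r/2}(x)$. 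Combined with compactness of $K\cap\partial\Omega$, we obtain a finite cover of a neighbourhood $N$ of $K\cap\partial\Omega$ in which $K$ is a finite union of $C^{1,\alpha}$ arcs with no endpoints or junctions.

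For (i) and finiteness, we use the Mumford--Shah conjecture \cite{deangelis-MS-conjecture}. It gives, in $\Omega$, the local description of $K$ as finitely many regular arcs, free endpoints, and $120^\circ$ triple junctions, locally finite in $\Omega$. The set $K\setminus N$ is compact and contained in $\Omega$, so it is covered by finitely many such balls. The main obstacle is the step at the boundary. Local finiteness in $\Omega$ could a priori degenerate as $\operatorname{dist}(\cdot,\partial\Omega)\to0$, with infinitely many components or endpoints accumulating at $\partial\Omega$. This is excluded precisely because the neighbourhood $N$ contains a full boundary collar around $K\cap\partial\Omega$ in which $K$ is a single graph. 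Away from $K\cap\partial\Omega$ we need a separate argument: near points of $\partial\Omega\setminus K$, $K$ stays at positive distance from the boundary, since $K$ is closed. So compactness closes the argument, and gluing the finitely many local pieces yields a finite union of regular $C^1$ arcs whose endpoints and junctions all lie in $\Omega$.
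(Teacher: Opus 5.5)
Your proposal is correct and follows essentially the same route as the paper. Truncation turns the fidelity term into an $O(r^2)$ perturbation, hence an $O(r)$ error after rescaling, so boundary blow-ups are generalized minimizers on $U$; Theorem~\ref{thm: classification at boundary contact} with the adapted boundary epsilon-regularity of \cite{deangelis-MS} gives (ii), \cite{deangelis-MS-conjecture} gives (i), and compactness of $K\subset\overline\Omega$ gives the finite cover (your split into a boundary collar $N$ and the compact set $K\setminus N\subset\Omega$ is just a more detailed version of that final step).
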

\begin{proof}
We first explain the inclusion of the fidelity term when $\lambda>0$.
By truncation and minimality, $|u|\leq M:=\max\{\|f\|_\infty,\|g\|_\infty\}$.
By truncation, it suffices to consider competitors whose functions take
values in $[-M,M]$. Since $f$ and these functions are bounded, the difference of the fidelity terms in a comparison
supported in $B_r(x)$ is $O(r^2)$. The rescaled energy is obtained by
dividing by $r$, so this contributes an $O(r)$ error, which vanishes as
$r\downarrow0$. The boundary compactness and epsilon-regularity proofs of
\cite{deangelis-MS} adapt to include these lower-order contributions.
In particular, boundary blow-ups are generalized minimizers of the same
homogeneous half-plane problem.

For $x\in K\cap\partial\Omega$, Theorem~\ref{thm: classification at boundary contact}
shows that every boundary blow-up has discontinuity set $T_x\partial\Omega$.
The boundary epsilon-regularity theorem
\cite[Theorem~1.1]{deangelis-MS}, with the extension just described,
gives a single $C^{1,\alpha}$ arc containing $x$ in its interior,
with tangent line $x+T_x\partial\Omega$ at $x$.

The interior description follows from
\cite[Theorem~1.1]{deangelis-MS-conjecture}. These local descriptions give
neighborhoods containing no free endpoints or triple junctions, except
possibly their centers in the interior case. Compactness of
$K\subset\overline\Omega$ then gives finitely many such points and a
finite cover by regular arcs, proving the global conclusion.
\end{proof}

For homogeneous Neumann boundary conditions, the analogous global
conclusion follows by combining \cite[Theorem~1.1]{deangelis-MS-conjecture}
with the boundary regularity result of David and L\'eger
\cite[Section~13]{david-leger}; see also \cite[Section~79]{david-book}.
For a bounded domain with $C^1$ boundary and a bounded fidelity datum,
the closure of the discontinuity set is a finite union of $C^1$ arcs
with the same interior structure, and the arcs that reach the boundary
end there orthogonally. Thus the interior and boundary results together
give the complete regularity description for both the Dirichlet and
homogeneous Neumann problems.

\subsection*{Proof strategy}
For a generalized minimizer $(K,u)$ on $U$, translate a point of
$K\cap \partial U$ to the origin and consider
\[
 K_R=\frac K R,\qquad u_R(y)=\frac{u(Ry)}{\sqrt R}.
\]
A limit of these pairs along a sequence $R\to\infty$, with the same
convergence and additive normalizations as above, is called a
blow-down. Boundary epsilon-regularity gives rigidity at infinity:
if a blow-down has discontinuity set $\partial U$, then $K=\partial U$
(Theorem~\ref{thm: rigidity at infinity}). A monotonicity argument
provides the circle-intersection criterion of
Proposition~\ref{prop: blow-ups s.t. K has 2 or more intersections with the circle of radius r for sufficiently large r}.
This criterion yields the classification when $\R^2\setminus K$ is
disconnected (Theorem~\ref{thm: blow-ups when K disconnects the plane}).

It remains to treat the case in which $\R^2\setminus K$ is connected.
Normal domain variations then define a nonnegative Radon measure
$\mu$ on $\partial U$, and a linear energy bound gives finite total mass
(Lemma~\ref{lmm: finite boundary reaction}). Away from $K$, this
measure has density equal to the square of the normal derivative of
$u$. Identifying $\partial U$ with $\R$, the measures associated with the
rescaled pairs satisfy $\mu_R(A)=\mu(RA)$ for Borel sets $A\subset\R$.
Their mass therefore tends to zero on every compact boundary interval
away from the origin as $R\to\infty$.

For a blow-down $(K_\infty,u_\infty)$, the density formula and harmonic
compactness yield zero Neumann data on boundary intervals disjoint from
$K_\infty$. Together with the inherited zero Dirichlet data, unique
continuation would give $u_\infty=0$ on $U\setminus K_\infty$ if
$\R^2\setminus K_\infty$ were connected. The classification of elementary
generalized minimizers \cite[Theorem~2.13]{deangelis-MS} would then give
$K_\infty=\partial U$, a contradiction. Hence $\R^2\setminus K_\infty$ is disconnected
for every blow-down (Lemma~\ref{lmm: separation in blowdown}). The classification
in that case gives a flat blow-down, and rigidity at infinity
concludes the proof.

Section~\ref{section: definitions and main results} fixes the
variational setting and states the main theorem.
Sections~\ref{section: rigidity proof}--\ref{section: disconnected proof}
establish rigidity at infinity, the circle-intersection criterion, and
the classification when $\R^2\setminus K$ is disconnected.
Section~\ref{section: classification at boundary contact} constructs
the measure on the boundary and completes the proof.

\section{Definitions and main result}
\label{section: definitions and main results}
\subsection*{The half-plane problem}
We set $U=\{x_2>0\}$ and recall the definition
of generalized minimizer from \cite[Definition~2.5]{deangelis-MS}.
An admissible pair consists of a closed set $K\subset\overline U$
and a function $u\in W^{1,2}_{\loc}(U\setminus K)$, with zero trace
on $\partial U\setminus K$, such that the local energy
\[
 E(K,u;O):=\int_{O\cap(U\setminus K)}|\nabla u|^2\,\dd{x}
                 +\HH^1(K\cap O)
\]
is finite for every bounded open set $O\subset\R^2$.

We call an admissible pair $(K,u)$ a \emph{generalized minimizer}
on $U$ if
\[
 E(K,u;O)\leq E(J,v;O)
\]
for every bounded open set $O\subset\R^2$ and every admissible pair
$(J,v)$ that agrees with $(K,u)$ outside $O$ and preserves the
separation of exterior points. More precisely, any two points in
$\R^2\setminus(O\cup K)$ belonging to distinct connected components
of $\R^2\setminus K$ must also belong to distinct connected components
of $\R^2\setminus J$.

As for the original minimizers, all generalized minimizers are normalized:
$\HH^1(K\cap B_r(z))>0$ for every $z\in K$ and $r>0$.
Moreover, $K$ is $\HH^1$-rectifiable, by the interior argument in
\cite[Appendix~D, Step~1]{delellis-focardi-book} and the inclusion
$K\cap \partial U\subset \partial U$.
We write $K_{x,r}=(K-x)/r$ and $u_{x,r}(y)=r^{-1/2}u(x+ry)$.

A generalized minimizer is called \emph{elementary} if
\[
 \int_{U\setminus K}|\nabla u|^2\,\dd{x}=0.
\]
This is the terminology of \cite[Definition~2.12]{deangelis-MS}.

\subsection*{Rescaling and limiting pairs}
For every $x\in \partial U$, the rescalings defined above preserve the
half-plane variational problem and satisfy
\[
 E(K_{x,r},u_{x,r};O)=\frac1r E(K,u;x+rO).
\]
A \emph{blow-up} is a subsequential limit along scales $r_j\downarrow0$;
a \emph{blow-down} is a subsequential limit along scales
$r_j\uparrow\infty$. Throughout the paper, convergence of pairs is
understood in the sense of
\cite[Proposition~2.3 and Theorem~2.9]{deangelis-MS}.
In particular, if $(K_j,u_j)\to(K_\infty,u_\infty)$, then
$K_j\to K_\infty$ locally in the Hausdorff distance, and the functions
converge in $C^\infty_{\loc}$ on each connected component of
$U\setminus K_\infty$, after subtracting suitable componentwise
additive constants when necessary. On components connected to the
lower half-plane through $\partial U\setminus K_\infty$, no constants need to
be subtracted, since the zero Dirichlet condition already fixes the
normalization. On these components, the convergence also holds locally
in $C^1$ up to $\partial U\setminus K_\infty$.

\subsection*{Main result}

The classification in Theorem~\ref{thm: classification at boundary contact}
below follows from Lemma~\ref{lmm: finite boundary reaction} and
Theorems~\ref{thm: blow-ups when K disconnects the plane} and
\ref{thm: rigidity at infinity}.

\begin{thm}[Classification of generalized minimizers meeting the boundary]
\label{thm: classification at boundary contact}
Let $(K,u)$ be a normalized generalized minimizer on $U=\{x_2>0\}$.
If $K\cap\partial U\neq\emptyset$, then
\[
 K=\partial U,\qquad u\equiv c\quad\hbox{in }U
\]
for some $c\in\R$.
\end{thm}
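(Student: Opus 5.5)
The proof will assemble the ingredients announced in the proof strategy. Translate a point of $K\cap\partial U$ to the origin and consider the rescalings $(K_R,u_R)$ with $R\to\infty$. By the compactness theory recalled above, every sequence $R_j\uparrow\infty$ has a subsequence along which $(K_{R_j},u_{R_j})$ converges to a blow-down $(K_\infty,u_\infty)$. This limit is again a normalized generalized minimizer on $U$. Moreover $0\in K_\infty$, since $K_R$ contains the origin and Hausdorff convergence preserves this. By Theorem~\ref{thm: rigidity at infinity}, it suffices to exhibit one blow-down with $K_\infty=\partial U$: that theorem then gives $K=\partial U$. The function $u$ is then harmonic on $U$ with locally finite Dirichlet energy. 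It is also a generalized minimizer with no jump interacting with the boundary data, so the elementary classification \cite[Theorem~2.13]{deangelis-MS}, applied on the single component $U$, gives $\nabla u\equiv0$ and hence $u\equiv c$.

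We split into two cases according to whether $\R^2\setminus K$ is connected. If it is disconnected, Theorem~\ref{thm: blow-ups when K disconnects the plane} classifies $(K,u)$ directly and gives the conclusion. If it is connected, we use the boundary measure $\mu$ from Lemma~\ref{lmm: finite boundary reaction}. This is a finite nonnegative Radon measure on $\partial U\cong\R$ obtained from normal domain variations, with density $(\partial_2u)^2$ away from $K$. Its rescalings satisfy $\mu_R(A)=\mu(RA)$. Since $\mu$ is finite, $\mu_R(I)\to0$ for every compact interval $I\subset\R\setminus\{0\}$.

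For a blow-down obtained along $R_j$, take a boundary interval $I$ disjoint from $K_\infty$. For large $j$ it is also disjoint from $K_{R_j}$. The $C^1$ convergence up to $\partial U\setminus K_\infty$ on components touching the boundary then gives $\partial_2u_\infty=0$ on $I$, in addition to $u_\infty=0$ on $I$. Cauchy data vanish on $I$, so unique continuation for harmonic functions (after odd reflection across $I$) gives $u_\infty\equiv0$ on the component of $U\setminus K_\infty$ adjacent to $I$.

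The main obstacle is the separation argument of Lemma~\ref{lmm: separation in blowdown}. Suppose $\R^2\setminus K_\infty$ were connected. Then every component of $U\setminus K_\infty$ reaches $\partial U\setminus K_\infty$ through the lower half-plane, so $u_\infty\equiv0$ on all of $U\setminus K_\infty$ and the blow-down is elementary. By \cite[Theorem~2.13]{deangelis-MS} and $0\in K_\infty$, this forces $K_\infty=\partial U$. But $\partial U$ separates the plane, a contradiction. So $\R^2\setminus K_\infty$ is disconnected. The delicate points here are:
\begin{enumerate}
\item justifying that every component of $U\setminus K_\infty$ meets a boundary interval free of $K_\infty$ when $\R^2\setminus K_\infty$ is connected;
\item ensuring that the density formula for $\mu_R$ passes to the limit.
\end{enumerate}
For the second point I would first try lower semicontinuity of $\int_I(\partial_2u_{R_j})^2$ under $C^1$ convergence.

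The final step applies Theorem~\ref{thm: blow-ups when K disconnects the plane} to the blow-down, which is disconnected by the previous paragraph. This gives $K_\infty=\partial U$, and Theorem~\ref{thm: rigidity at infinity} then yields $K=\partial U$ and $u\equiv c$, completing the connected case.
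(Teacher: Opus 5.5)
Apart from the final step, your argument is the paper's. The case split on the connectedness of $\R^2\setminus K$ is the paper's contradiction argument in different form. In the connected case you use Lemma~\ref{lmm: finite boundary reaction}, the vanishing of $u_\infty$ and $\partial_2u_\infty$ on boundary intervals free of $K_\infty$, unique continuation, Lemma~\ref{lmm: separation in blowdown}, Theorem~\ref{thm: blow-ups when K disconnects the plane} for the blow-down, and Theorem~\ref{thm: rigidity at infinity}, in the same order as the paper. The paper resolves your two delicate points as you anticipate. A path in the lower component from $V$ to $\{x_2<0\}$ first meets $\partial U$ at a point outside $K_\infty$, and a small upper half-ball there lies in $V$. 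And $C^1$ convergence up to $I\times\{0\}$ gives actual convergence of $\int_I|\partial_2u_j|^2$, not only lower semicontinuity.

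The step that fails as written is the last one, deducing $u\equiv c$ from $K=\partial U$. \cite[Theorem~2.13]{deangelis-MS} classifies \emph{elementary} generalized minimizers. It assumes $\int_{U\setminus K}|\nabla u|^2=0$ and concludes $K=\partial U$, so it cannot give $\nabla u\equiv0$; using it here is circular. The paper instead invokes \cite[Proposition~2.15]{deangelis-MS}, the classification for connected $K$.

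A direct argument also works. Since $\partial U\setminus K=\emptyset$, there is no trace constraint. Hence the competitors $(K,u+t\varphi)$ with $\varphi\in C^1_c(\R^2)$ are admissible and preserve separation, which shows that $u$ is harmonic in $U$ with zero Neumann data. Its even reflection $\tilde u$ is then harmonic on $\R^2$, and the linear energy bound gives $\int_{B_R}|\nabla\tilde u|^2\le CR$. The mean value property and Cauchy--Schwarz then give $|\nabla\tilde u(x)|^2\le C(R+|x|)/R^2\to0$ as $R\to\infty$. With this repair your proof is complete, in both of your cases.
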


\section{Rigidity at infinity}
\label{section: rigidity proof}

First we remark that the epsilon-regularity theorem presented in \cite[Theorem 1.1]{deangelis-MS} is true also for generalized minimizers. Its proof is a simple adaptation of the proof given there. The corresponding statement is the following.
\begin{thm}[Epsilon-regularity for generalized minimizers]
    \label{thm: epsilon-regularity generalized minimizers}
    There exist constants $\eta,\varepsilon, \alpha \in (0,1)$ with the following property. Assume 
    \begin{enumerate}[label=(\roman*)]
        \item $(K,u)$ is a generalized minimizer on $U = \{x_2 > 0\}$ and $x \in K \cap \partial U$,
        \item $r^{-1} \dist_H(K \cap \overline{B}_{2r}(x), \partial U \cap \overline{B}_{2r}(x)) + r^\frac{1}{2} < \varepsilon$.
    \end{enumerate}
    Then $K \cap B_{\eta r}(x)$ is the graph of a $C^{1,\alpha}$ function.
\end{thm}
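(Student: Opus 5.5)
The plan is to go through the proof of \cite[Theorem~1.1]{deangelis-MS} step by step and check that minimality is only ever used through competitors that are local and that preserve the separation of exterior points. That proof has the usual structure of epsilon-regularity arguments. First, small flatness gives a decay of the Dirichlet energy: by comparison with a harmonic replacement in a half-ball with a cut along $\partial U$. Second, there is an excess-decay or tilt-lemma iteration for the flatness of $K$ near $\partial U$. This iteration uses competitors that replace $K\cap B_\rho(x)$ by $\partial U\cap B_\rho(x)$ together with small arcs of $\partial B_\rho(x)$, and that modify $u$ by cut-off and extension. Third, a Campanato-type argument turns the geometric decay into $C^{1,\alpha}$ regularity of a graph. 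The term $r^{1/2}$ in hypothesis (ii) controls the deviation of the boundary datum $g$ and of the $C^{1,\gamma}$ boundary. In the half-plane setting both deviations vanish, so $r^{1/2}<\varepsilon$ only restricts the scale and plays no further role.

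The first check is that every competitor $(J,v)$ used in these arguments agrees with $(K,u)$ outside some ball $B_\rho(x)$ and is admissible in the half-plane sense: $J\subset\overline U$ is closed, $v\in W^{1,2}_\loc(U\setminus J)$, and $v$ has zero trace on $\partial U\setminus J$. This is immediate from their construction, since the Dirichlet datum was already used to define them. The second, and main, check is topological. Any two points of $\R^2\setminus(B_\rho(x)\cup K)$ lying in different components of $\R^2\setminus K$ must still lie in different components of $\R^2\setminus J$. I would argue this as follows. The competitors always satisfy
\[
J\supset (K\setminus B_\rho(x))\cup\bigl(\partial U\cap \overline B_\rho(x)\bigr)\cup(\text{arcs of }\partial B_\rho(x)\text{ joining }K\cap\partial B_\rho(x)\text{ to }\partial U).
\]
More precisely, $J$ contains $K\cap\partial B_\rho(x)$. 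Flatness puts $K\cap\partial B_\rho(x)$ within $\varepsilon\rho$ of the two points $\partial U\cap\partial B_\rho(x)$, and the added arcs connect these points to $\partial U$. In addition, the lower half-plane $\R^2\setminus\overline U$ lies in a single component of $\R^2\setminus K$.

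Given this, consider any path in $\R^2\setminus J$ between the two exterior points. If it enters $B_\rho(x)$, it does so either through the lower half-ball or through the upper part of $\partial B_\rho(x)$ off $J$. In both cases the portion inside $B_\rho(x)$ can be rerouted along $\partial B_\rho(x)$ on the same side, staying in $\R^2\setminus K$. This produces a path in $\R^2\setminus K$ joining the two points and contradicts that they were separated by $K$. Where the original proof instead uses competitors that merely add length, for instance by adding $\partial B_\rho(x)\cap\overline U$ to build a cut-off function, separation is preserved trivially, because adding closed sets can only refine the components.

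Once these competitors are admissible for the generalized problem, the comparison inequalities $E(K,u;B_\rho(x))\le E(J,v;B_\rho(x))$ used in \cite{deangelis-MS} hold verbatim. The compactness steps inside the proof, such as contradiction arguments that pass to limits, are handled as follows. I would use the convergence of generalized minimizers from \cite[Theorem~2.9]{deangelis-MS}, which keeps the limits in the class of generalized minimizers. The remaining steps only involve harmonic-function estimates, so the constants $\eta,\varepsilon,\alpha$ are unchanged. The obstacle I expect is exactly the separation bookkeeping for competitors that remove pieces of $K$ inside the ball. If the rerouting argument fails for some competitor, I would first try enlarging $J$ by the full arc $\partial B_\rho(x)\cap\overline U$. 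This costs at most $\pi\rho$ of extra length, which looks too expensive. The cost can be reduced by choosing $\rho$ by a Fubini argument so that the relevant arcs are short, at a price of $O(\varepsilon\rho)$.
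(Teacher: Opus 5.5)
This is the same route as the paper. The paper gives no argument beyond asserting that the proof of \cite[Theorem~1.1]{deangelis-MS} is a simple adaptation. Your checks supply the content that adaptation requires: the competitors are local and admissible in the half-plane sense, they preserve the separation of exterior points (excursions into $B_\rho(x)$ are rerouted along $\partial B_\rho(x)$ or kept in the lower half-plane), and compactness stays within the generalized class via \cite[Theorem~2.9]{deangelis-MS}. The rerouting is sound for competitors with the structure you describe, though you assume rather than verify that the competitors in \cite{deangelis-MS} have that structure. One minor technicality: a competitor containing new arcs of $\partial B_\rho(x)$ does not agree with $K$ outside the open ball $B_\rho(x)$, so the comparison must be made on a slightly larger ball. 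This is harmless, since $J=K$ and $v=u$ outside $\overline B_\rho(x)$.
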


As a consequence of the epsilon-regularity theorem, we obtain the following rigidity result.
\begin{thm}[Rigidity at infinity]
    \label{thm: rigidity at infinity}
    Let $(K,u)$ be a generalized minimizer on $U = \{x_2 > 0\}$ with $0 \in K$. Suppose that $r_n \uparrow \infty$ and $K_{0, r_n} \to \partial U$ locally in the Hausdorff distance as $n \to \infty$. Then $K = \partial U$. 
\end{thm}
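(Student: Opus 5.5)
The idea is to apply Theorem~\ref{thm: epsilon-regularity generalized minimizers} to the rescaled pairs $(K_{0,r_n},u_{0,r_n})$ at unit scale, using the invariance of the class of generalized minimizers under the rescalings $K_{x,r}$, $u_{x,r}$ for $x\in\partial U$. The obstacle is the term $r^{1/2}$ in hypothesis (ii). For the original Dirichlet problem that term measures the curvature of $\partial\Omega$ and the oscillation of $g$ at scale $r$. In the half-plane problem both vanish identically, so I expect that in the adaptation to generalized minimizers the term is not needed, or can be replaced by any fixed small constant. To make this explicit, I would use scale invariance as follows: if $(K,u)$ is a generalized minimizer, then so is $(K_{x,\lambda},u_{x,\lambda})$ for every $\lambda>0$. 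Hypothesis (ii) at radius $r$ for $(K,u)$ is equivalent to hypothesis (ii) at radius $r/\lambda$ for the rescaled pair. Since the Hausdorff term is scale invariant, we may choose $\lambda$ so that $(r/\lambda)^{1/2}<\varepsilon/2$, and then the condition reduces to
\[
r^{-1}\dist_H\bigl(K\cap\overline B_{2r}(x),\,\partial U\cap\overline B_{2r}(x)\bigr)<\varepsilon/2 .
\]
The conclusion scales back: $K\cap B_{\eta r}(x)$ is a $C^{1,\alpha}$ graph.

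With this scale-free version, fix $R>0$. By hypothesis $K_{0,r_n}\to\partial U$ locally in Hausdorff distance, so for $n$ large
\[
\dist_H\bigl(K_{0,r_n}\cap\overline B_{2},\,\partial U\cap\overline B_2\bigr)<\varepsilon/4 .
\]
I would like to apply the criterion not only at $0$ but at every $x\in K\cap\partial U$ with $|x|\le R$, at scale $r=r_n$ (or $r_n/4$), after checking that the centered Hausdorff closeness persists for balls centered at such $x$. I expect a small loss in the constants, handled by comparing $B_{2r}(x)$ inside $B_{4r}(0)$ and using the convergence on $\overline B_4$. The output is that $K\cap B_{\eta r_n}(0)$ is the graph of a $C^{1,\alpha}$ function $f_n$ over $\partial U$, with $f_n\ge0$ since $K\subset\overline U$.

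The key step is to upgrade this to $K=\partial U$. The theorem gives more than a graph: the proof of \cite[Theorem 1.1]{deangelis-MS} yields decay estimates of the form
\[
\|f_n\|_{C^{1,\alpha}}\lesssim \text{(initial flatness)}
\]
at unit scale, and in the scale-invariant setting these read
\[
\sup_{|t|<\eta r_n}\frac{|f_n(t)|}{r_n}\le C\,\delta_n,
\qquad
\delta_n:=r_n^{-1}\dist_H\bigl(K\cap\overline B_{2r_n},\,\partial U\cap\overline B_{2r_n}\bigr)\to0 .
\]
The graphs over nested balls coincide on their common domain. Hence on any fixed ball $B_R$, $K$ is the graph of a single function $f$ with
\[
\sup_{|t|<R}|f(t)|\le C\,\delta_n r_n \quad\text{for all large } n .
\]
This bound alone is not enough, since $\delta_n r_n$ need not tend to $0$. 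I would therefore use the $C^{1,\alpha}$ seminorm, which scales as
\[
[f']_{\alpha,B_{\eta r_n}}\le C\,\delta_n\, r_n^{-\alpha}\to0 ,
\]
so $f'$ is constant on $\mathbb R$. Since $0\in K$ and $f\ge0$, we get $f(0)=0$. An affine nonnegative function vanishing at $0$ has zero slope, so $f\equiv0$ and $K\cap B_R=\partial U\cap B_R$ for every $R$, that is, $K=\partial U$.

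The main obstacle is justifying the quantitative $C^{1,\alpha}$ estimate with constant proportional to the flatness $\delta_n$, together with the removal of the $r^{1/2}$ term. If the cited theorem only gives qualitative graphicality, I would instead argue directly: $K\cap B_{\eta r_n}$ is a connected graph through $0$, so $\partial U\cap B_{\eta r_n}\setminus K$ is empty or disconnected from $0$. I would then compare energies, replacing $K\cap B_{\eta r_n}$ by the segment $\partial U\cap B_{\eta r_n}$. This requires a competitor in which $u$ is kept on $U$ above the graph and extended into the thin region, and it must preserve separation because both sets separate $U$ from the lower half-plane. Any nontrivial graph is strictly longer than the segment, so minimality would force $K=\partial U$ locally, uniformly in $n$.
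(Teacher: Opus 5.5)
Your first step agrees with the paper. Neutralizing the term $r^{1/2}$ by scale invariance is exactly what the paper does: it applies Theorem~\ref{thm: epsilon-regularity generalized minimizers} at the fixed radius $r=\tfrac14\eps^2$ to the rescaled generalized minimizers $(K_{0,r_n},u_{0,r_n})$, so that $K\cap B_{\eta r r_n}$ is a $C^{1,\alpha}$ graph for large $n$. Applying the criterion at other points $x\in K\cap\partial U$ is unnecessary.

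The gap is in the upgrade to $K=\partial U$. Your main route needs a universal bound on $[f_n']_{C^{0,\alpha}}$ at unit scale, but Theorem~\ref{thm: epsilon-regularity generalized minimizers} only asserts graphicality. Proportionality to $\delta_n$ is not needed: any universal constant $C$ already gives $[f']_{\alpha}\le C r_n^{-\alpha}\to0$ after rescaling, and your affine argument would then be correct. However, that estimate would have to be extracted from the proof in \cite{deangelis-MS}.

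Your fallback does not fill this gap. Replacing the graph by the segment saves only about $\tfrac12\int|f'|^2$ of length. Extending $u$ into $\{0<x_2<f(x_1)\}$ across the removed graph, and matching it with $u$ on $\partial B_{\eta r_n}$, costs Dirichlet energy that you never compare with this saving. Separation is not automatic either. If $f>0$ on $\partial B_{\eta r_n}$, exterior points of that thin region lie in the component of the lower half-plane, and they become joined to the region above the graph. This is forbidden when that region lies in a different component.

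The missing observation is that the qualitative statement already suffices. Since $\eta r r_n\to\infty$, the set $K=\bigcup_n(K\cap B_{\eta r r_n})$ is an increasing union of connected graphs, all containing $0$, and is therefore connected. The classification of generalized minimizers with connected discontinuity set, \cite[Proposition~2.15]{deangelis-MS}, then gives $K=\partial U$. This is the paper's argument. It needs neither a quantitative $C^{1,\alpha}$ bound nor a new comparison, and your comparison would essentially have to reprove that proposition.
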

\begin{proof}
Let $\eta, \eps, \alpha$ be given by Theorem \ref{thm: epsilon-regularity generalized minimizers} and set $r = \tfrac{1}{4} \eps^2$. Define $(K_n, u_n) := (K_{0,r_n}, u_{0, r_n})$. Since $K_n \to \partial U$ locally in the Hausdorff distance, we have 
\begin{equation*}
    r^{-1} \dist_H(K_n \cap \overline{B}_{2r}, \partial U \cap \overline{B}_{2r}) + r^\frac{1}{2} < \varepsilon
\end{equation*}
\fsln. Moreover, $(K_n, u_n)$ is a generalized minimizer on $U$  and $0 \in K_n \cap \partial U$ for every $n \in \N$. Therefore, by Theorem \ref{thm: epsilon-regularity generalized minimizers}, $K_n \cap B_{\eta r}$ is the graph of a $C^{1,\alpha}$ function.
This implies that $K \cap B_{\eta r r_n}$ is the graph of a $C^{1,\alpha}$ function \fsln, and hence $K$ is connected. Therefore, by \cite[Proposition 2.15]{deangelis-MS}, $K = \partial U$.
\end{proof}

\section{A circle-intersection criterion}
\label{section: circle criterion}

\begin{prop}
    \label{prop: blow-ups s.t. K has 2 or more intersections with the circle of radius r for sufficiently large r}
    Let $(K,u)$ be a generalized minimizer on $U = \{x_2 > 0\}$ with $0 \in K$. Suppose that there exists $R>0$ such that $\HH^0(K \cap \partial B_r) \geq 2$ for all $r>R$. Then $K = \partial U$.
\end{prop}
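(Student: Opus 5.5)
The plan is to combine a monotonicity formula with the rigidity at infinity of Theorem~\ref{thm: rigidity at infinity}. Set
\[
 \Phi(r)=\frac1r E(K,u;B_r).
\]
For a generalized minimizer on the half-plane with $0\in\partial U$, I will compare $(K,u)$ on $B_r$ with the cone competitor. This competitor takes $J=(K\setminus B_r)\cup\{t\theta: 0\le t\le r,\ r\theta\in K\cap\partial B_r\}$ inside $B_r$, together with a radial extension of $u|_{\partial B_r}$ made homogeneous of degree $1/2$. It preserves the separation of exterior points, since every ray from $0$ to a point of $K\cap\partial B_r$ is added to $J$. The zero trace on $\partial U\setminus J$ is kept because $u=0$ on $\partial U\cap\partial B_r$ off $K$, and the radial extension of zero is zero. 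Its energy is
\[
 \frac r2\int_{\partial B_r\setminus K}\Bigl(|\partial_\tau u|^2+\tfrac14 r^{-2}u^2\Bigr)+r\,\HH^0(K\cap\partial B_r).
\]
The first term is handled, on each arc of $\partial B_r\cap U\setminus K$, by subtracting a constant and applying Wirtinger's inequality. The constant is allowed only when the arc has no zero trace; arcs ending on $\partial U$ carry zero Dirichlet data and need no subtraction. Up to the usual factor bookkeeping, this yields a differential inequality $\Phi'(r)\ge0$, with a defect term.

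The key point is how the hypothesis $\HH^0(K\cap\partial B_r)\ge2$ enters. For $r>R$ there are at least two points of $K$ on $\partial B_r$, so the arcs of $\partial B_r\setminus K$ have length at most $\pi r$. On arcs of length at most $\pi r$ the Wirtinger constant is good enough to give the sharp inequality for the elastic part. The length part contributes $r\,\HH^0(K\cap\partial B_r)$, and this is compared with $\HH^1(K\cap B_r)$ through the coarea inequality $\frac{d}{dr}\HH^1(K\cap B_r)\ge\HH^0(K\cap\partial B_r)$. I therefore expect $\Phi$ to be monotone nondecreasing on $(R,\infty)$, and bounded by the standard upper energy bound $E(K,u;B_r)\le Cr$ obtained by cutting along $\partial B_r$. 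Hence $\Phi(r)\to\Phi_\infty$ as $r\to\infty$.

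Next I will pass to blow-downs. Take $r_n\uparrow\infty$ and extract a blow-down $(K_\infty,u_\infty)$ using the compactness of \cite{deangelis-MS}. Because $\Phi$ has a limit, $r\mapsto E(K_\infty,u_\infty;B_r)/r$ is constant. This forces equality in every step of the monotonicity argument, and I conclude that $K_\infty$ is a cone and $u_\infty$ is $1/2$-homogeneous. Equality in Wirtinger on arcs of length at most $\pi$ allows only $u_\infty$ constant, or $\sqrt r\,\sin(\theta/2)$-type profiles on arcs of length exactly $2\pi$, which is impossible here. Since $U$ forces the Dirichlet zero data, I get $\nabla u_\infty=0$, so the blow-down is elementary.

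By the classification of elementary generalized minimizers \cite[Theorem~2.13]{deangelis-MS}, an elementary minimizer containing $0\in\partial U$ is $K_\infty=\partial U$. The remaining possibility, that $0\notin K_\infty$ or that $K_\infty$ is an interior line or cone, has to be excluded. This I do by noting that $0\in K_n$ for all $n$, that normalized lower density bounds pass to the limit, and that the two intersection points persist in the limit. Theorem~\ref{thm: rigidity at infinity} then gives $K=\partial U$.

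The main obstacle is the sharp monotonicity step. Precisely, I have to check that the hypothesis of at least two intersection points yields arcs short enough for the Wirtinger constant to beat the $1/2$-homogeneous scaling. I also have to handle arcs with one endpoint on $\partial U$, where the relevant arc length is measured relative to Dirichlet data on $\partial U$, which is only a mixed condition. If the direct Wirtinger bound fails for arcs of length close to $\pi r$, I would first try to use the mixed Dirichlet condition on arcs touching $\partial U$, which doubles the effective arc length available.
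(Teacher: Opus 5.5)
Your outer structure matches the paper. You blow down along $r_n\uparrow\infty$ and show the blow-down is elementary. You then apply \cite[Theorem~2.13]{deangelis-MS}, using that $0\in K_\infty$ by Hausdorff convergence, to get $K_\infty=\partial U$, and conclude with Theorem~\ref{thm: rigidity at infinity}. The gap is the middle step, showing the blow-down is elementary, which carries all the content.

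\textbf{Your monotonicity of $\Phi(r)=E(K,u;B_r)/r$ is not established.}
\begin{itemize}
\item The claim that $\HH^0(K\cap\partial B_r)\ge2$ forces the arcs of $\partial B_r\setminus K$ to have length at most $\pi r$ is false: two intersection points can be arbitrarily close.
\item In $U$, the arcs of $\partial B_r\cap U\setminus K$ have length at most $\pi r$ for free. The genuinely bad arcs are those ending at $(\pm r,0)\notin K$. There no constant can be subtracted, and the one-sided Dirichlet Wirtinger constant is $2\ell/\pi$. Such an arc therefore behaves like a free arc of \emph{double} length, so the mixed condition makes matters worse, not better.
\item Even on arcs where constants may be subtracted, the $\tfrac12$-homogeneous cone competitor does not close. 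Its Dirichlet energy is $r\int_{\partial B_r}|\partial_\tau u|^2+\frac1{4r}\int_{\partial B_r}(u-c)^2$, while coarea only gives $rE'(r)\ge r\int_{\partial B_r}|\nabla u|^2+r\HH^0(K\cap\partial B_r)$. You are left with $rE'(r)-E(r)\ge r\int_{\partial B_r}|\partial_\nu u|^2-\frac1{4r}\int_{\partial B_r}(u-c)^2$, which has no sign: Wirtinger controls $u-c$ by $\partial_\tau u$, not by $\partial_\nu u$.
\end{itemize}

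\textbf{The equality-case step is also not sound as written.}
\begin{itemize}
\item Applying a monotonicity formula to $(K_\infty,u_\infty)$ requires $\HH^0(K_\infty\cap\partial B_r)\ge2$. This does not pass to Hausdorff limits, since two intersection points can merge.
\item Constancy of $E(K_\infty,u_\infty;B_r)/r$ additionally needs convergence of energies.
\item Equality in Wirtinger does not force constants, since the first eigenfunction $\cos(\pi s/\ell)$ attains it.
\end{itemize}

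\textbf{How the paper avoids all of this.} It uses the different quantity $F(r)=\frac2r\int_{B_r\cap U\setminus K}|\nabla u|^2+\frac1r\HH^1(K\cap B_r)$. This is nondecreasing on $(R,\infty)$ by \cite[Proposition~2.10]{deangelis-MS}, where the hypothesis enters, and bounded by the linear energy estimate. Crucially, the paper uses the quantitative bound $F'(\rho)\ge\frac1\rho\int_{\partial B_\rho\cap U\setminus K}|\partial_\nu u|^2$, and only for the original pair. Integrating over $(ar_j,br_j)$ and rescaling gives $\int_{(B_b\setminus\overline B_a)\cap(U\setminus K_j)}|x|^{-1}|\partial_r u_j|^2\le F(br_j)-F(ar_j)\to0$. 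Local convergence of gradients away from $K_\infty$ and Fatou then yield $\partial_r u_\infty=0$. Integration by parts, using the Neumann condition on $K_\infty$ and the zero Dirichlet condition on $\partial U\setminus K_\infty$, gives $\int_{B_\rho}|\nabla u_\infty|^2=\int_{\partial B_\rho}u_\infty\,\partial_\nu u_\infty=0$. No energy convergence, no hypothesis on the limit, and no equality-case analysis are needed. What your proposal is missing is precisely such a monotone quantity with a derivative lower bound in terms of $|\partial_\nu u|^2$.
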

\begin{proof}[Proof of Proposition \ref{prop: blow-ups s.t. K has 2 or more intersections with the circle of radius r for sufficiently large r}]
    Let $r_n \uparrow \infty$ as $n\to \infty$ and define $(K_n, u_n) := (K_{0,r_n}, u_{0,r_n})$ for all $n \in \N$. By \cite[Theorem 2.9]{deangelis-MS}, there is $(K_\infty, u_\infty)$ generalized minimizer on $U$ such that, up to subsequences, $(K_n, u_n)$ converges to $(K_\infty,u_\infty)$ in the sense of the theorem. 

    Then, by Lemma \ref{lmm: elementary blow-down if eventually 2 or more intersections for the circle}, $(K_\infty,u_\infty)$ is elementary. Therefore, since $0 \in K_\infty \cap \partial U$, $K_\infty = \partial U$ by \cite[Theorem 2.13]{deangelis-MS}.
    In particular, $K_{0,r_n} \to \partial U$ locally in the Hausdorff distance and Theorem \ref{thm: rigidity at infinity} implies that $K = \partial U$.
\end{proof}

We now prove the auxiliary lemma used above.

\begin{lmm}
    \label{lmm: elementary blow-down if eventually 2 or more intersections for the circle}
    Let $(K,u)$ satisfy the assumptions of Proposition \ref{prop: blow-ups s.t. K has 2 or more intersections with the circle of radius r for sufficiently large r}. Let $r_j \uparrow \infty$ and suppose that the rescalings $(K_{0,r_j}, u_{0,r_j})$ converge to $(K_\infty,u_\infty)$ in the sense of \cite[Theorem 2.9]{deangelis-MS}. Then $(K_\infty,u_\infty)$ is elementary. 
\end{lmm}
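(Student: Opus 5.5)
We plan to prove the lemma with a monotonicity argument of Bonnet type, adapted to the half-plane with zero Dirichlet data. Since $0\in K\cap\partial U$, we set
\[
 D(r):=\int_{B_r\cap U\setminus K}|\nabla u|^2\,\dd{x},
\]
which is finite by admissibility. We will show that $D(r)/r$ is nondecreasing for $r>R$ and bounded. It then has a limit $\ell$ as $r\to\infty$, and this limit is also the limit of the rescaled energies $D_j(s)/s$ for the blow-down scales $r_j$. The plan is to prove that the equality case of the monotonicity forces the limit $u_\infty$ to be locally constant on $U\setminus K_\infty$. That is exactly the statement that $(K_\infty,u_\infty)$ is elementary.

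\textbf{Step 1 (differential inequality).} For a.e.\ $r>R$, the circle $\partial B_r$ meets $K$ in finitely many points, and at least two of them by hypothesis. Together with the segments of $\partial U$, these points cut $\partial B_r\cap U\setminus K$ into arcs $I_1,\dots,I_m$. For each arc we use the equation $\Delta u=0$, the Neumann condition on $K$, and the zero Dirichlet condition on $\partial U\setminus K$. We integrate by parts in $B_r\cap U\setminus K$ after subtracting, on each arc $I_i$, a constant $c_i$; this needs a cut-off and is justified as in Bonnet's argument.
\begin{itemize}
\item On an arc with an endpoint on $\partial U\setminus K$ we must take $c_i=0$. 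The Poincar\'e/Wirtinger inequality with one zero endpoint gives $\int_{I}|u|^2\le (2|I|/\pi)^2\int_I|\partial_\tau u|^2$.
\item On arcs with both endpoints on $K$ we take the mean, and the constant is $(|I|/\pi)^2$.
\end{itemize}
Since $K\cap\partial B_r$ has at least two points, every arc has length $|I_i|\le \pi r$, and the half circle in $U$ has length $\pi r$. In each case we obtain $\int_{I_i}|u-c_i|^2\le C_i r^2\int_{I_i}|\partial_\tau u|^2$. Then Cauchy--Schwarz gives
\[
 D(r)\le \sum_i\int_{I_i}|u-c_i||\partial_\nu u|\le r\int_{\partial B_r\cap U\setminus K}|\nabla u|^2=rD'(r).
\]
This requires checking that the constants work out to $1$ in the worst case. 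An arc from $\partial U$ to a point of $K$ has length at most $\pi r$ with one zero endpoint, which gives $(2\pi r/\pi)^2/4$ after using the optimal $\tfrac12$ splitting. Hence $(D(r)/r)'\ge0$.

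\textbf{Step 2 (boundedness and passage to the limit).} The standard comparison (cut out $B_r$, add $\partial B_r\cap \overline U$, set $u=0$) gives $D(r)+\HH^1(K\cap B_r)\le \pi r$. We must check that this competitor preserves separation; it does, because $\partial B_r$ is added. So $D(r)/r\to \ell\le\pi$. By the convergence of \cite[Theorem~2.9]{deangelis-MS}, including the convergence of energies, $D_\infty(s)=\ell s$ for all $s$.

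\textbf{Step 3 (equality case).} The hypothesis passes to $K_\infty$, at least in the weak form $\HH^0(K_\infty\cap\partial B_s)\ge 2$ for a.e.\ $s$. We will obtain this from the Hausdorff convergence and a density-lower-bound argument, or more simply by running Step 1 for $u_\infty$ via lower semicontinuity. Then equality $D_\infty=sD_\infty'$ forces equality in Wirtinger and Cauchy--Schwarz on each arc. This makes $u_\infty$ $\tfrac12$-homogeneous, of the form $\sqrt{s}\,(a\cos(\theta/2)+\dots)$ on sectors. The boundary conditions, together with the arc-length constraint, leave only the equality arcs of length exactly $\pi s$. Such an arc would have to join $\partial U$ to a point of $K_\infty$ through the whole half circle, which is incompatible with $K_\infty$ containing two points on the circle inside $\overline U$ unless they lie on $\partial U$. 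We conclude that $\nabla u_\infty=0$.

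The main obstacle is this equality analysis in Step 3. In particular, transferring the hypothesis "at least two intersections" to the limit is delicate, since Hausdorff limits can merge points. To handle it, we will first try to avoid the transfer altogether: we use the quantitative gap in Step 1 away from degenerate configurations and integrate it between $r_j s_1$ and $r_j s_2$. Because $D(r)/r$ converges, the resulting deficit tends to zero. Rescaled, this forces the tangential energy of $u_j$ on circles to vanish in the limit, and hence $\nabla u_\infty=0$.
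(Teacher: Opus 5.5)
There is a genuine gap, and it appears already in Step 1. Bonnet's identity $D(r)=\sum_i\int_{I_i}(u-c_i)\,\partial_\nu u\,\dd{\HH^1}$ is valid only if $\int_{I_i}\partial_\nu u\,\dd{\HH^1}=0$ on every arc where you subtract a nonzero constant $c_i$. In Bonnet's setting this zero-flux property comes from the connectedness of $K$: the harmonic conjugate of $u$ is then constant on $K$. Here $K$ is not assumed connected. Moreover, the connected case is already settled by \cite[Proposition~2.15]{deangelis-MS}, so disconnected $K$ is exactly the case of interest. The two endpoints of an arc $I_i$ may lie on different components of $K$. Worse, a component $W$ of $B_r\cap U\setminus K$ that touches $\partial U\setminus K$ generally receives a nonzero flux through $\partial U$, and this flux must leave through its arcs on $\partial B_r$. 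On such a $W$ you are forced to take $c=0$ on every arc, including arcs with both endpoints on $K$. On those arcs $u$ need not vanish, so no Wirtinger inequality is available. The monotonicity of $D(r)/r$ is therefore not established, and the hypothesis $\HH^0(K\cap\partial B_r)\ge2$ never enters in a way that could compensate.

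Step 3 is not closed either. Without the two-point hypothesis for $K_\infty$, your equality analysis cannot exclude the half crack-tip $u=\sqrt{\rho}\sin(\theta/2)$ on $U$, with $\theta$ measured from the positive $x_1$-axis and Neumann set $\{(s,0):s\le0\}$. It satisfies both boundary conditions, $D(s)/s=\pi/4$ is constant, and every inequality in your chain is an equality. That hypothesis does not pass to Hausdorff limits, because intersection points can merge. Your fallback does not help: the Wirtinger gap on an arc $I$ adjacent to $\partial U\setminus K$ is proportional to $1-|I|/(\pi r)$, which degenerates precisely when the points of $K\cap\partial B_r$ cluster. The Cauchy--Schwarz deficit only controls $(\|\partial_\tau u\|-\|\partial_\nu u\|)^2$, not the tangential energy.

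The missing idea is a monotone quantity whose derivative controls the \emph{normal} derivative on circles. The paper uses the David--L\'eger-type quantity $F(r)=\frac2r D(r)+\frac1r\HH^1(K\cap B_r)$ from \cite[Proposition~2.10]{deangelis-MS}. Its monotonicity uses the two intersection points through the length term and needs no connectedness. Its proof also gives $F'(\rho)\ge\frac1\rho\int_{\partial B_\rho\cap U\setminus K}|\partial_\nu u|^2\,\dd{\HH^1}$. Integrating over $[ar_j,br_j]$ and rescaling yields $\int_{(B_b\setminus\overline{B}_a)\cap(U\setminus K_j)}|x|^{-1}|\partial_r u_j|^2\to0$, so $\partial_r u_\infty=0$. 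Then $\int_{B_\rho\cap(U\setminus K_\infty)}|\nabla u_\infty|^2=\int_{\partial B_\rho\cap(U\setminus K_\infty)}u_\infty\,\partial_\nu u_\infty\,\dd{\HH^1}=0$. This route needs no constants to subtract, no equality-case analysis, and no transfer of the hypothesis to $K_\infty$.
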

\begin{proof}
Define, for $r>0$,
\[
 F(r):=\frac{2}{r}\int_{B_r\cap U\setminus K}|\nabla u|^2
       +\frac{1}{r}\HH^1(K\cap B_r).
\]
By \cite[Proposition~2.10]{deangelis-MS}, $F$ is nondecreasing on
$(R,\infty)$. The standard energy upper bound
(cf.~\cite[Proposition~8.1]{deangelis-MS}) gives
$l_\infty:=\lim_{r\to\infty}F(r)<\infty$.
Moreover, the proof of \cite[Proposition~2.10]{deangelis-MS} yields
\begin{equation}
\label{eq: radial-energy-controlled-by-F}
 F'(\rho)\geq\frac{1}{\rho}
 \int_{\partial B_\rho\cap U\setminus K}
 |\partial_\nu u|^2\,\dd{\HH^1}
\end{equation}
for a.e. $\rho>R$.

Set $K_j=K_{0,r_j}$ and $u_j=u_{0,r_j}$, and fix $0<a<b$.
For $j$ large enough that $ar_j>R$, integrating
\eqref{eq: radial-energy-controlled-by-F} between $ar_j$ and $br_j$
(the singular part of the distributional derivative of $F$ is
nonnegative), then applying the coarea formula
\cite[Section~2.12, equation~(2.74)]{ambrosio-fusco-pallara} and
rescaling, gives
\begin{equation}
\label{eq: radial-energy-annulus}
 \int_{(B_b\setminus\overline B_a)\cap(U\setminus K_j)}
 \frac{1}{|x|}
 \left|\frac{x}{|x|}\cdot\nabla u_j(x)\right|^2\,\dd{x}
 \leq F(br_j)-F(ar_j)\longrightarrow0.
\end{equation}
Local uniform convergence of the gradients away from $K_\infty$
\cite[Proposition~2.3]{deangelis-MS}, together with Fatou's lemma
and the arbitrariness of $a,b$, therefore gives
\[
 \frac{x}{|x|}\cdot\nabla u_\infty(x)=0
 \qquad\text{for a.e. }x\in U\setminus K_\infty.
\]

Integrating by parts, using the weak Euler--Lagrange equation and the zero
Dirichlet condition on $\partial U\setminus K_\infty$, we obtain, for a.e.
$\rho>0$,
\[
 \int_{B_\rho\cap(U\setminus K_\infty)}
 |\nabla u_\infty|^2
 =\int_{\partial B_\rho\cap(U\setminus K_\infty)}
 u_\infty\,\partial_\nu u_\infty\,\dd{\HH^1}=0.
\]
Thus $(K_\infty,u_\infty)$ is elementary.
\end{proof}

\section{Disconnected complement}
\label{section: disconnected proof}

\begin{thm}
    \label{thm: blow-ups when K disconnects the plane}
    Let $(K,u)$ be a generalized minimizer on $U = \{x_2 > 0\}$ with $0 \in K$. Suppose that $\R^2 \setminus K$ is not connected. Then $K = \partial U$.
\end{thm}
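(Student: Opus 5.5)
The strategy is to reduce to the circle-intersection criterion of Proposition~\ref{prop: blow-ups s.t. K has 2 or more intersections with the circle of radius r for sufficiently large r}: we show that $\HH^0(K\cap\partial B_r)\geq 2$ for all sufficiently large $r$, and then that proposition gives $K=\partial U$.

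First we locate a bounded-away component. Since $K\subset\overline U$, the open lower half-plane $\{x_2<0\}$ is contained in a single component $V_0$ of $\R^2\setminus K$. Because $\R^2\setminus K$ is disconnected, there is another component $V\subset U$, and its boundary satisfies $\partial V\subset K$. We split into two cases.

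\emph{Case 1: $V$ is bounded.} We would rule this out by a competitor argument. Set $J:=K\setminus\partial V$, or more carefully remove the part of $\partial V$ not adjacent to other components. On $V$, extend $u$ from a neighbouring component, or simply set it equal to the values on the adjacent region. The difficulty is that removing $\partial V$ may merge $V$ with other components. The separation condition in the definition of generalized minimizer only concerns points outside $O\cup K$, with $O$ a neighbourhood of $\overline V$. Exterior points in distinct components stay separated provided we only delete $K\cap\partial V$ portions that separate $V$ from another component $W$, with $W$ unaffected outside $O$. The cleaner route is to use the standard fact, valid also for generalized minimizers and established in \cite{deangelis-MS}, that components of $\R^2\setminus K$ cannot be bounded. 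Equivalently, a bounded component would force $u$ to be constant there by minimality, and then erasing a piece of $\partial V$ of positive length, which exists by the normalization, strictly decreases the energy. The Dirichlet datum is irrelevant here since $V\subset U$ is interior. If this case is already covered by a cited result, we invoke it directly.

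\emph{Case 2: $V$ is unbounded.} Here $V\subset U$ and $V$ is disjoint from $V_0$. For $r$ large, the circle $\partial B_r$ meets $V$, because $V$ is unbounded and connected and, for $r$ exceeding $\dist(0,V)$, some point of $V$ lies on $\partial B_r$. The circle also meets $V_0$ at its lower half. The arc $\partial B_r\cap\overline U$ runs from $(r,0)$ to $(-r,0)$, and both endpoints lie either in $K$ or in $V_0$, since a point of $\partial U\setminus K$ is adjacent to the lower half-plane. Traversing the circle from a point of $V$ in both directions until reaching $V_0$, we must cross $K$ on each side, and these crossings are at distinct points because the two directions meet only after passing through $V_0$. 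Hence $\HH^0(K\cap\partial B_r)\geq 2$ for all $r>\dist(0,V)$, and Proposition~\ref{prop: blow-ups s.t. K has 2 or more intersections with the circle of radius r for sufficiently large r} concludes the argument.

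The main obstacle is Case 1, excluding bounded components inside $U$, since the competitor must respect the separation constraint. If we cannot cite it directly, we would first attempt the following. Pick a component $V$ bounded and not of the form containing $V_0$, let $O$ be a slightly larger ball, define $J=K\setminus(\partial V\cap\partial W)$ for a chosen neighbour $W$, and set $v=u$ off $V$ and $v$ equal to the harmonic extension on $V\cup W$ locally. Controlling that energy requires care, so the preferred approach is the constancy of $u$ on $V$ plus deletion of the whole $\partial V$ piece where other components lie only on one side.
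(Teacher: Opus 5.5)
Your proposal is correct and follows the paper's proof. Both arguments exclude bounded components of $\R^2\setminus K$, observe that an unbounded component $V\neq V_0$ and the lower half-plane meet every large circle, so each of the two arcs of $\partial B_r$ joining them contains a point of $K$, and then conclude with the circle-intersection criterion. The paper also handles your Case~1 by citation, saying that the argument of \cite[Lemma~4.3.1]{delellis-focardi-book} carries over. Your own sketch (constancy of $u$ on $V$ plus erasing part of $\partial V$) would not suffice as written: erasing a piece of $\partial V$ merges $V$ with a neighbouring component on which $u$ is generally not that constant, so you would need a local repair costing less energy than the length saved, for instance at a regular point of $K$.
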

\begin{proof}
Let $\Omega_*$ be the connected component of $\R^2 \setminus K$ containing the lower half-plane. Since $\R^2 \setminus K$ is not connected, there exists $C$ connected component of $\R^2 \setminus K$ different from $\Omega_*$. The argument in the proof of
\cite[Lemma~4.3.1]{delellis-focardi-book} also applies here and shows
that $C$ is unbounded.
In particular, since $C\neq\emptyset$, there is $R>0$ such that
$C\cap B_r\neq\emptyset$ for every $r>R$.
Since $C$ is connected and unbounded, we must have $C \cap \partial B_r \neq \emptyset$ for all $r > R$.
Observe that we cannot have $\HH^0(K \cap \partial B_r) \leq 1$ for some $r > R$, otherwise $\partial B_r$ connects $C$ and $\Omega_*$. Hence $\HH^0(K \cap \partial B_r) \geq 2$ for all $r > R$. We conclude by Proposition \ref{prop: blow-ups s.t. K has 2 or more intersections with the circle of radius r for sufficiently large r}.
\end{proof}

\section{Proof of the main theorem}
\label{section: classification at boundary contact}

We prove Theorem~\ref{thm: classification at boundary contact} using the
results above. The key step is to represent the first variation under
normal domain variations by a finite nonnegative measure on $\partial U$. Under
rescaling, its mass tends to zero on compact boundary intervals away from
the origin. Compactness and the classification when $\R^2\setminus K$
is disconnected then give a flat blow-down, to which rigidity at infinity
applies.

\begin{lmm}[A finite measure on the boundary]
\label{lmm: finite boundary reaction}
Let $(K,u)$ be a normalized generalized minimizer on $U$ such that
$\R^2\setminus K$ is connected.
There is a finite nonnegative Radon measure $\mu$ on $\R$ such that
\begin{equation}
\label{eq: flat reaction}
 \int_{U\setminus K}T_u:D(e_2\varphi)\,\dd{x}
 +\int_K(\tau\otimes\tau):D(e_2\varphi)\,\dd{\HH^1}
 =\int_{\R}\varphi(s,0)\,\dd{\mu}(s)
\end{equation}
for every $\varphi\in C_c^1(\R^2)$, where
$T_u=|\nabla u|^2I-2\nabla u\otimes\nabla u$ and $\tau$ is an
approximate unit tangent to $K$. On each open interval
$I\subset\R$ with $I\times\{0\}\subset \partial U\setminus K$,
\begin{equation}
\label{eq: flat reaction density}
 \mu\mathbin{\mres}I=|\partial_2u(s,0)|^2\,\dd{s}.
\end{equation}
For $r>0$, let $\mu_r$ be the measure associated with the rescaled pair
$(K_{0,r},u_{0,r})$ through \eqref{eq: flat reaction}. Then
\begin{equation}
\label{eq: flat reaction scaling}
 \mu_r(A)=\mu(rA)\qquad\text{for every Borel set }A\subset\R.
\end{equation}
\end{lmm}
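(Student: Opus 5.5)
Define the linear functional
\[
 \Lambda(\varphi):=\int_{U\setminus K}T_u:D(e_2\varphi)\,\dd{x}
 +\int_K(\tau\otimes\tau):D(e_2\varphi)\,\dd{\HH^1},
\qquad \varphi\in C_c^1(\R^2).
\]
The plan is to show that $\Lambda$ is the first variation of the energy under the deformation $\Phi_t(x)=x+te_2\varphi(x)$, and then to read off its properties.

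\textbf{Step 1: sign, support and representation.} If $\varphi\ge0$, then $\Phi_t$ maps $\overline U$ into $\overline U$ for small $t>0$, since the normal component only increases. Transporting $(K,u)$ by $\Phi_t$, with $u\circ\Phi_t^{-1}$ extended by zero near the boundary, gives an admissible competitor. This is because $\Phi_t$ is a diffeomorphism, so it preserves separation of components. Minimality then gives $\frac{d}{dt}\big|_{t=0^+}E\ge0$. The standard domain-variation computation identifies this derivative with $\Lambda(\varphi)$, so $\Lambda(\varphi)\ge0$.

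If $\varphi$ is supported in $U$ or in the lower half-plane, then both signs of $t$ are allowed. The interior Euler--Lagrange equation, or triviality in the lower half-plane, gives $\Lambda(\varphi)=0$. The two-sided argument also shows that $\Lambda(\varphi)$ depends only on $\varphi|_{\partial U}$: write $\varphi-\varphi(\cdot,0)\chi$ and argue by approximation.

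Hence $\Lambda$ is a positive distribution on $\R$. By the Riesz theorem it is a nonnegative Radon measure $\mu$, which gives \eqref{eq: flat reaction}.

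\textbf{Step 2: density on the Dirichlet part.} On an interval $I$ with $I\times\{0\}\subset\partial U\setminus K$, $u$ is harmonic near $I\times\{0\}$ with zero trace. By boundary regularity ($C^{1,\gamma}$ boundary and zero data), $u\in C^{1,\beta}$ up to the boundary there. Integrating $\operatorname{div}(T_u e_2)=0$ by parts on $U$, with $\nabla u=\partial_2u\,e_2$ on the boundary, gives the boundary term
\[
 -T_u e_2\cdot e_2\,\varphi=-(|\nabla u|^2-2|\partial_2u|^2)\varphi=|\partial_2u|^2\varphi ,
\]
taking the outward normal to be $-e_2$. This proves \eqref{eq: flat reaction density}.

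\textbf{Step 3: scaling.} Change variables $x=ry$ in $\Lambda$ with $\varphi_r(x)=\varphi(x/r)$. We have $T_{u_{0,r}}(y)=T_u(ry)$, and $D(e_2\varphi)$ picks up a factor $r^{-1}$. The area element gives $r^{-2}$, so the bulk term scales as $r^{-1}\cdot r^{-2}\cdot r^{2}$, consistent with $E$ scaling by $1/r$. Tracking the factors carefully gives $\mu_r(A)=\mu(rA)$, with no extra power of $r$, because $\varphi$ enters without a derivative on the right side and the boundary measure scales like length. If a stray factor appears, the intended statement presumably incorporates it; I would check \eqref{eq: flat reaction scaling} by testing with $\varphi_r$ directly.

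\textbf{Step 4: finiteness (main obstacle).} I would test with a cutoff $\varphi_R$ equal to $1$ on $B_R$, supported in $B_{2R}$, with $|D\varphi_R|\le C/R$. Then
\[
 \mu([-R,R])\le\Lambda(\varphi_R)\le \frac{C}{R}\,E(K,u;B_{2R})\le C ,
\]
using the linear energy upper bound $E(K,u;B_{2R})\le CR$ (cf.~\cite[Proposition~8.1]{deangelis-MS}). Letting $R\to\infty$ gives $\mu(\R)<\infty$.

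The delicate points are in Step 1. First, the admissibility of the pushed competitor: its trace condition on $\partial U\setminus\Phi_t(K)$ must hold, and this uses the connectedness hypothesis so that no additive constants obstruct zero extension. Second, identifying the one-sided derivative rigorously for non-smooth $K$ via rectifiability. I expect the competitor construction to be the main obstacle, and I would handle it as in \cite{deangelis-MS}.
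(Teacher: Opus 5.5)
Your route is the paper's. You transport the zero extension of $u$ by $\Phi_t=\mathrm{Id}+t\varphi e_2$, use one-sided minimality for the sign and two-sided minimality to see that $\Lambda$ depends only on $\varphi|_{\partial U}$, and apply Riesz. You then bound $\mu([-R,R])$ with the cutoff and the linear energy bound, and read off the density from $\operatorname{div}T_u=0$ and $\partial_1u=0$ on the Dirichlet part. Steps 1, 2 and 4 are sound.

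\textbf{Step 3 is wrong as written.} It does not prove \eqref{eq: flat reaction scaling}, and the formula it rests on is false. Since $\nabla u_{0,r}(y)=r^{1/2}\nabla u(ry)$, one has $T_{u_{0,r}}(y)=r\,T_u(ry)$, not $T_u(ry)$. With your formula, the bulk term of $\Lambda(\varphi(\cdot/r))$ would be $r$ times the bulk term of the rescaled functional, while the surface term would carry no factor. So no relation between $\mu_r$ and $\mu$ would follow, and the hedge about a stray factor cannot repair this. Your heuristic also points the wrong way: a boundary measure scaling like length would give $\mu_r(A)=r^{-1}\mu(rA)$.

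The correct bookkeeping is to test \eqref{eq: flat reaction} for $(K,u)$ with $\varphi(\cdot/r)$ and substitute $x=ry$.
\begin{itemize}
\item In the bulk term, the factors $r^{-1}$ (derivative), $r^{2}$ (area element) and $r^{-1}$ (from $T_u(ry)=r^{-1}T_{u_{0,r}}(y)$) cancel.
\item In the surface term, $r^{-1}$ from the derivative cancels against $r$ from $\HH^1$, and $\tau$ is scale invariant.
\end{itemize}
Hence $\int_{\R}\varphi(s/r,0)\,\dd{\mu}(s)=\int_{\R}\varphi(s,0)\,\dd{\mu_r}(s)$. Thus $\mu_r$ is the push-forward of $\mu$ under $s\mapsto s/r$, which is \eqref{eq: flat reaction scaling}. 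As a check, on a $K$-free interval $\mu_r\mres I=r\,|\partial_2u(rs,0)|^2\,\dd{s}$, whose mass on $A$ is $\int_{rA}|\partial_2u|^2=\mu(rA)$. The exact absence of a factor matters. With $\mu_r(A)=r\,\mu(rA)$, the tail estimate \eqref{eq: reaction tail} in Lemma~\ref{lmm: separation in blowdown} would no longer follow from $\mu(\R)<\infty$.

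\textbf{Two smaller points.} First, you do not need $\varphi\ge0$ everywhere in Step 1. If only $\varphi(\cdot,0)\ge0$, then $x_2+t\varphi(x)\ge(1-t\|D\varphi\|_\infty)x_2\ge0$ on $\overline U$ for $0\le t<1/\|D\varphi\|_\infty$, so $\Phi_t(\overline U)\subset\overline U$ still holds. Applied to $\pm\varphi$ with $\varphi|_{\partial U}=0$, this gives $\Lambda(\varphi)=0$ directly, as in the paper. Your approximation argument also works. However, the cut-off functions have zero gradient on $\partial U$ while $D\varphi$ need not. So you must use that the surface integrand $(\tau\otimes\tau):D(e_2\varphi)=\tau_2\,\tau\cdot\nabla\varphi$ vanishes $\HH^1$-a.e.\ on $K\cap\partial U$, where $\tau=\pm e_1$.

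Second, connectedness of $\R^2\setminus K$ is not what permits the zero extension. Zero trace on $\partial U\setminus K$ is part of admissibility. One extends $u$ by zero first and then composes with $\Phi_t^{-1}$, since $\Phi_t^{-1}$ may move points of $U$ below $\partial U$. The result vanishes on $\{x_2<0\}$ precisely because $\Phi_t(\overline U)\subset\overline U$. The paper uses connectedness only to make the separation constraint vacuous. Your observation that a diffeomorphism equal to the identity outside $O$ preserves separation is correct, so your argument does not actually use that hypothesis.
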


\begin{proof}
The standard half-ball comparison
(cf.~\cite[Proposition~8.1]{deangelis-MS}) gives
\begin{equation}
\label{eq: halfplane linear energy}
 E(K,u;B_R)\leq(\pi+2)R\qquad(R>0).
\end{equation}

Extend $u$ by zero to the lower half-plane and denote the extension by
$\widehat u$. The zero trace gives
$\widehat u\in W^{1,2}_{\loc}(\R^2\setminus K)$. Since
$\R^2\setminus K$ is connected, the separation condition in the
definition of generalized minimizer is vacuous.

For $\varphi\in C_c^1(\R^2)$ with $\varphi(s,0)\geq0$, set
\[
 X=\varphi e_2,\qquad \Phi_t=\operatorname{Id}+tX.
\]
For $|t|\|DX\|_\infty<1$, the map $\Phi_t$ is a $C^1$
diffeomorphism of $\R^2$, and
$\Phi_t(\overline U)\subset\overline U$ for $t\geq0$. The pair
\[
 K_t=\Phi_t(K),\qquad
 w_t=(\widehat u\circ\Phi_t^{-1})|_U
\]
is admissible. The transported zero extension vanishes in the lower
half-plane.

Changing variables in the Dirichlet integral and applying the area formula
\cite[Theorem~2.91]{ambrosio-fusco-pallara} to the length term,
minimality shows that the left-hand side of
\eqref{eq: flat reaction}, denoted by $A(\varphi)$, is nonnegative.
If $\varphi|_{\partial U}=0$, applying this inequality to $\varphi$ and
$-\varphi$ gives $A(\varphi)=0$. By linearity, $A$ therefore depends only
on the restriction of $\varphi$ to $\partial U$. By the Riesz representation theorem
\cite[Corollary~1.55]{ambrosio-fusco-pallara}, the induced positive linear functional on $C_c^1(\R)$ is represented by
a locally finite nonnegative Radon measure $\mu$.

To prove finiteness on the whole line, take
$0\leq\chi\in C_c^1(B_2)$ equal to one on $B_1$ and set
$\chi_R(x)=\chi(x/R)$. Applying \eqref{eq: flat reaction}
with $\varphi=\chi_R$ and using \eqref{eq: halfplane linear energy}, we obtain
\begin{equation}
\label{eq: finite reaction mass}
 \mu([-R,R])
 \leq\frac{C}{R}
 \left(\int_{B_{2R}\cap(U\setminus K)}|\nabla u|^2\,\dd{x}
                   +\HH^1(K\cap B_{2R})\right)
 \leq C.
\end{equation}
Letting $R\uparrow\infty$ gives $\mu(\R)<\infty$.

On a boundary interval $I\times\{0\}\subset \partial U\setminus K$,
harmonic boundary regularity and $u=0$ give $\partial_1u=0$. Hence
\[
 (T_u(-e_2))\cdot e_2=|\partial_2u|^2
 \quad\hbox{on }I\times\{0\}.
\]
Using $\operatorname{div}T_u=0$, integration by parts in
\eqref{eq: flat reaction}, with test functions supported near
$I\times\{0\}$ and away from $K$, yields
\eqref{eq: flat reaction density}. Finally, inserting
$\varphi(\cdot/r)$ into \eqref{eq: flat reaction} and changing
variables in the bulk and surface terms proves
\eqref{eq: flat reaction scaling}; the critical factor $r^{-1/2}$
in $u_{0,r}$ leaves no additional factor in front of $\mu(rA)$.
\end{proof}

\begin{lmm}[Separation in a blow-down]
\label{lmm: separation in blowdown}
Let $(K,u)$ be a normalized generalized minimizer on $U$ with $0\in K$
such that $\R^2\setminus K$ is connected.
For every blow-down $(K_\infty,u_\infty)$ obtained from
\cite[Theorem~2.9]{deangelis-MS}, $\R^2\setminus K_\infty$ is disconnected.
\end{lmm}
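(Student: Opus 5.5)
We argue by contradiction: suppose $(K_\infty,u_\infty)$ is a blow-down along $r_j\uparrow\infty$ and $\R^2\setminus K_\infty$ is connected.

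The first step is to show that $u_\infty$ has zero Neumann data on boundary intervals avoiding $K_\infty$. Fix a compact interval $J\subset\R$ with $J\times\{0\}\subset\partial U\setminus K_\infty$. We may assume $0\notin J$, since $0\in K_\infty$ by Hausdorff convergence. Choose an open interval $I\supset J$ whose closure still avoids $K_\infty$ and $0$. By local Hausdorff convergence, $\overline I\times\{0\}$ is disjoint from $K_{0,r_j}$ for large $j$, and so is a small half-disc neighbourhood of it. By \eqref{eq: flat reaction density} and \eqref{eq: flat reaction scaling},
\[
 \int_I|\partial_2u_{0,r_j}(s,0)|^2\,\dd{s}=\mu_{r_j}(I)=\mu(r_jI)\le\mu\bigl(\R\setminus(-r_j\delta,r_j\delta)\bigr),
\]
where $\delta=\dist(0,I)>0$. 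Since $\mu$ is finite (Lemma~\ref{lmm: finite boundary reaction}), the right-hand side tends to $0$.

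The component of $U\setminus K_\infty$ adjacent to $I\times\{0\}$ is joined to the lower half-plane through $\partial U\setminus K_\infty$, so no additive constants are subtracted there. The convergence is therefore locally $C^1$ up to $I\times\{0\}$, giving $\partial_2u_\infty=0$ on $I\times\{0\}$. The inherited Dirichlet condition gives $u_\infty=0$ there as well.

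Next I propagate this vanishing to all of $U\setminus K_\infty$. Cauchy data $u_\infty=\partial_2u_\infty=0$ on a boundary segment let us reflect: the odd reflection of $u_\infty$ across $\partial U$ near $I$ is harmonic with vanishing normal derivative. Equivalently, extending $u_\infty$ by zero below $I$ produces a harmonic function on a full disc around a point of $I\times\{0\}$ that vanishes on an open set. By unique continuation, $u_\infty\equiv0$ on the component of $U\setminus K_\infty$ touching $I$.

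I must then cover every component of $U\setminus K_\infty$. Here connectedness of $\R^2\setminus K_\infty$ is used. Every component of $U\setminus K_\infty$ lies in the unique component of $\R^2\setminus K_\infty$, which contains the lower half-plane. Hence it can reach the lower half-plane only by crossing $\partial U\setminus K_\infty$. So each component of $U\setminus K_\infty$ has on its boundary an open boundary interval disjoint from $K_\infty$. I would justify this with a path argument: a path from the component into $\{x_2<0\}$ avoiding $K_\infty$ must first hit $\partial U$ at a point outside $K_\infty$, and a small neighbourhood of that point gives the interval. If that interval contains $0$, we shrink it, since $0\in K_\infty$ anyway. Therefore $u_\infty\equiv0$ on $U\setminus K_\infty$, and the blow-down is elementary.

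To conclude, $0\in K_\infty\cap\partial U$, so \cite[Theorem~2.13]{deangelis-MS} gives $K_\infty=\partial U$. But then $\R^2\setminus K_\infty$ is disconnected, contradicting the assumption.

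The main obstacle is the first step. It needs two things: that $K_{0,r_j}$ eventually avoids a fixed neighbourhood of $I\times\{0\}$, so that \eqref{eq: flat reaction density} applies to the rescaled pairs, and that the convergence is $C^1$ up to the boundary on that component, so that $L^2$ smallness of the normal derivatives passes to the limit. For the latter I would cite the boundary convergence statement in Section~\ref{section: definitions and main results}. If pointwise $C^1$ convergence is only available on compact subsets, Fatou's lemma on $J$ suffices.
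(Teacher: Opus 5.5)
Your proposal is correct and follows essentially the same route as the paper. You use the tail bound $\mu(r_jI)\to0$ from finiteness of $\mu$, then $C^1$ convergence up to $\partial U\setminus K_\infty$ on components reached from the lower half-plane to get $u_\infty=\partial_2u_\infty=0$, then zero extension plus unique continuation, the first-hitting-point path argument to reach every upper component, and finally \cite[Theorem~2.13]{deangelis-MS} for the contradiction; the only differences from the paper are cosmetic (assuming connectedness of $\R^2\setminus K_\infty$ at the outset, and passing to an open interval $I\supset J$ to match the hypothesis of \eqref{eq: flat reaction density}).
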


\begin{proof}
Let $R_j\uparrow\infty$ give the blow-down, and write
$K_j=K_{0,R_j}$ and $u_j=u_{0,R_j}$. The limit is a normalized
generalized minimizer, $0\in K_\infty$, and the functions converge
without subtracting constants on components accessible from the lower
half-plane, as in \cite[Proposition~2.3 and Theorem~2.9]{deangelis-MS}.

Let $I\subset\R$ be a compact interval such that
$I\times\{0\}\subset \partial U\setminus K_\infty$ and set
$d=\operatorname{dist}(I,0)>0$. A neighborhood of $I\times\{0\}$
is disjoint from $K_j$ for large $j$. The measure of
Lemma~\ref{lmm: finite boundary reaction} satisfies
\begin{equation}
\label{eq: reaction tail}
 \int_I|\partial_2u_j(s,0)|^2\,\dd{s}
 =\mu(R_jI)
 \leq\mu(\{s:|s|\geq dR_j\})\longrightarrow0.
\end{equation}
Here the convergence follows from $\mu(\R)<\infty$.
By \cite[Proposition~2.3(i) and Theorem~2.9]{deangelis-MS},
$u_j\to u_\infty$ in $C^1$ up to $I\times\{0\}$.
Combining this with \eqref{eq: reaction tail} and the zero Dirichlet
condition gives
\[
 u_\infty=\partial_2u_\infty=0
 \quad\hbox{on }I\times\{0\}.
\]

Let $\Omega_*$ be the component of $\R^2\setminus K_\infty$ containing
the lower half-plane, and let $V$ be a component of
$U\setminus K_\infty$ contained in $\Omega_*$. A path in $\Omega_*$ from
$V$ to the lower half-plane first meets $\partial U$ at a point outside
$K_\infty$. A small ball there is disjoint from $K_\infty$ and its upper half
belongs to $V$. Thus $V$ reaches an interval of the preceding kind.
After shrinking the ball, both $u_\infty$ and $\partial_2u_\infty$
vanish on its diameter. The extension of $u_\infty$ by zero to the
lower half-ball is therefore weakly harmonic in the whole ball.
Since it vanishes on the lower half-ball, unique continuation gives
$u_\infty=0$ on the upper half-ball and then throughout $V$.

If $\R^2\setminus K_\infty$ were connected, every upper component
would be of this kind, so $u_\infty=0$ on $U\setminus K_\infty$.
Thus $(K_\infty,u_\infty)$ is elementary. Since $0\in K_\infty\cap \partial U$,
\cite[Theorem~2.13]{deangelis-MS} gives $K_\infty=\partial U$, contradicting
the connectedness of $\R^2\setminus K_\infty$.
\end{proof}

\begin{proof}[Proof of Theorem~\ref{thm: classification at boundary contact}]
Translate a point of $K\cap \partial U$ to $0$. To prove $K=\partial U$, suppose by
contradiction that $K\neq \partial U$. Then
Theorem~\ref{thm: blow-ups when K disconnects the plane} implies that
$\R^2\setminus K$ is connected. Extract a blow-down
$(K_\infty,u_\infty)$ using \cite[Theorem~2.9]{deangelis-MS}.
By Lemma~\ref{lmm: separation in blowdown},
$\R^2\setminus K_\infty$ is disconnected. Since $0\in K_\infty$,
Theorem~\ref{thm: blow-ups when K disconnects the plane} gives
$K_\infty=\partial U$. Therefore $K_{0,R_j}\to \partial U$ locally in the Hausdorff
distance for some $R_j\uparrow\infty$, and
Theorem~\ref{thm: rigidity at infinity} yields $K=\partial U$, a contradiction.
Since $K=\partial U$ is connected, \cite[Proposition~2.15]{deangelis-MS}
gives $\nabla u=0$ in $U$,
and hence $u$ is constant.
\end{proof}

\section*{Acknowledgements}
Funded by the Deutsche Forschungsgemeinschaft (DFG, German Research
Foundation) under Germany's Excellence Strategy EXC 2044/2 -- 390685587,
Mathematics M\"unster: Dynamics--Geometry--Structure.

The author used ChatGPT Astra to complete the proof of
Theorem~\ref{thm: classification at boundary contact},
building on his previous progress on the Dirichlet boundary problem. The author had also established further partial results,
omitted here because they follow as corollaries of this result.
The finiteness argument for the measure
in Lemma~\ref{lmm: finite boundary reaction}, the separation argument for
blow-downs in Lemma~\ref{lmm: separation in blowdown}, and the concluding
proof of Theorem~\ref{thm: classification at boundary contact} were
developed with the assistance of ChatGPT Astra. These are the arguments
in Section~\ref{section: classification at boundary contact}; they build
on the rigidity and topological results established earlier in the
manuscript and on the author's previous work \cite{deangelis-MS}.
ChatGPT was also used during the subsequent revision and simplification
of the mathematical arguments and their exposition.
The author has revised the manuscript and takes full responsibility for
its mathematical content and references.

\end{document}